\documentclass{amsart}

\usepackage{amsthm,amssymb,amsmath,a4wide,graphicx,tikz}
\usepackage[english]{babel}
\usepackage{subfigure}
\usepackage{bm,url}
\usepackage[normalem]{ulem}
\usepackage[percent]{overpic}
\usepackage[hidelinks]{hyperref}

\renewcommand{\P}{\mathbb P}

\title{Pseudo-random number generation with $\beta$-encoders}
\author[Charlene Kalle]{Charlene Kalle}
\address[Charlene Kalle]{Mathematisch Instituut, Leiden University, Niels Bohrweg 1, 2333CA Leiden, The Netherlands}
\email[Charlene Kalle]{kallecccj@math.leidenuniv.nl}
\author[Evgeny Verbitskiy]{Evgeny Verbitskiy}
\address[Evgeny Verbitskiy]{Mathematisch Instituut, Leiden University, Niels Bohrweg 1, 2333CA Leiden, The Netherlands\textit{ and }}
\address{
Bernoulli Institute for Mathematics, Computer Science and Artificial Intelligence,
University of Groningen, 
PO Box 407
9700 AK Groningen, The Netherlands }

\email[Evgeny Verbitskiy]{evgeny@math.leidenuniv.nl}
\author[Benthen Zeegers]{Benthen Zeegers$^\dagger$}
\address[Benthen Zeegers]{Mathematisch Instituut, Leiden University, Niels Bohrweg 1, 2333CA Leiden, The Netherlands}
\email[Benthen Zeegers]{benthen@math.leidenuniv.nl}

\begin{document}

\subjclass[2020]{11A63, 11K45, 37H12, 60F05, 94A17, 94C99}
\keywords{$\beta$-encoder, binary expansions, Lochs' Theorem, random number generation}

\begin{abstract}
The $\beta$-encoder is an analog circuit that converts an input signal $x \in [0,1]$ into a finite bit stream $\{b_i\}$. The bits $\{b_i\}$ are correlated and therefore are not immediately suitable for random number generation, but they can be used to generate bits $\{a_i\}$ that are (nearly) uniformly distributed. In this article we study two such methods. In the first part the bits $\{a_i\}$ are defined as the digits of the base-2 representation of the original input $x$. Under the assumption that there is no noise in the amplifier we then study a question posed by Jitsumatsu and Matsumura on how many bits $b_1, \ldots, b_m$ are needed to correctly determine the first $n$ bits $a_1,\ldots,a_n$. In the second part we show this method fails for random amplification factors. Nevertheless, even in this case,
nearly uniformly distributed bits can still be generated from $b_1,\ldots,b_m$ using modern cryptographic techniques.
\end{abstract}

\newtheorem{prop}{Proposition}[section]
\newtheorem{theorem}{Theorem}[section]
\newtheorem{lemma}{Lemma}[section]
\newtheorem{cor}{Corollary}[section]
\newtheorem{remark}{Remark}[section]
\theoremstyle{definition}
\newtheorem{defn}{Definition}[section]
\newtheorem{ex}{Example}[section]

\maketitle

\section{Introduction}

Any real number $x\in[0,1]$ can be represented in base 2 as
\begin{equation}\label{binreps}
x=\sum_{n=1}^\infty \frac {a_n}{2^n}, \quad a_{n}\in\{0,1\}.
\end{equation}
With the exception of a countable set of dyadic rationals of the form 
$x=\frac {K}{2^N}$, $K,N\in\mathbb Z_+$, the representation (\ref{binreps}) is unique. The digits $\{a_n=a_n(x)\}$ can be obtained iteratively as follows: let $x_0=x$, and for $n\ge 1$, we let
\begin{equation}\label{eq:binexpanster}
a_n=\begin{cases}0,\text{ if \/ } 2x_{n-1}<1,\\
1,\text{ if \/ } 2x_{n-1}\ge 1,
\end{cases}\text{ and}\quad x_n=2x_{n-1}-a_n.
\end{equation}

\medskip
Similarly, for $\beta\in (1,2)$, any number $x\in[0,1]$ can also be represented in \emph{non-integer base} $\beta$ as 
\begin{equation}\label{betaexpans}
x = \sum_{n=1}^\infty \frac {b_n}{\beta^n},\quad 
\end{equation}
again with binary digits $b_n$  in $\{0,1\}$. (In fact, any number $x \in \big[0, \frac1{\beta-1}\big]$ has an expansion of the form \eqref{betaexpans}.) Since $\beta \in (1,2)$ is not an integer, Lebesgue almost all points $x$ have uncountably many different $\beta$-expansions \cite{ejk,si}. This somewhat curious fact from number theory has some interesting applications in signal processing. As is well known, for each $u \in [1,\frac{1}{\beta-1}]$ expansions of the form in \eqref{betaexpans} can be obtained in a similar fashion as the base 2 expansions by setting $x_0 = x$, and for $n \geq 1$,
\begin{equation}\label{eq:binexpanster}
b_n=\begin{cases}0,\text{ if \/ } \beta x_{n-1}<u,\\
1,\text{ if \/ } \beta x_{n-1}\ge u,
\end{cases}\text{ and}\quad x_n=\beta x_{n-1}-b_n.
\end{equation}
This iteration scheme is used in $\beta$-encoders, which were introduced in \cite{DDGV02} by Daubechies et al.~in 2002. Using an \emph{amplifier} with amplification factor $\beta$ and a \emph{quantiser}
$$
Q_u(y) =\begin{cases} 0,&\text{ if } y < u,\\
1, &\text{ if }y\ge u, \end{cases}
$$
for an input signal $x = x_0$ in $[0,1]$ a $\beta$-encoder outputs bits $b_n = Q_u(\beta x_{n-1})$ where $x_n = \beta x - Q_u(\beta x_{n-1})$, see Figure \ref{ch1fig1}, which corresponds to the iteration scheme in \eqref{eq:binexpanster}. In practice, however, due to the intrinsic presence of noise in analogue circuits, the amplification factor $\beta$ and the threshold value $u$ fluctuate during the operation of a $\beta$-encoder circuit. If we denote by $(\beta_n)_{n \ge 1}$ and $(u_n)_{n \ge 1}$  the consecutive (random) amplification factors $\beta_n$ and threshold values $u_n$, respectively, used at each time step of the approximation algorithm, the $\beta$-encoder in reality outputs bits $b_n = Q_{u_n}(\beta_n x_{n-1})$ where $x_n = \beta_n x - Q_{u_n}(\beta_n x_{n-1})$. The robustness of the $\beta$-encoder in the A/D-conversion process has been studied in e.g.~\cite{DDGV,DY06,ward,JW09,DGWY,KHTA,KHA,SKMAH,Makino}.

\begin{figure}[h]
\centering
\begin{overpic}[width=0.6\textwidth,tics=10]{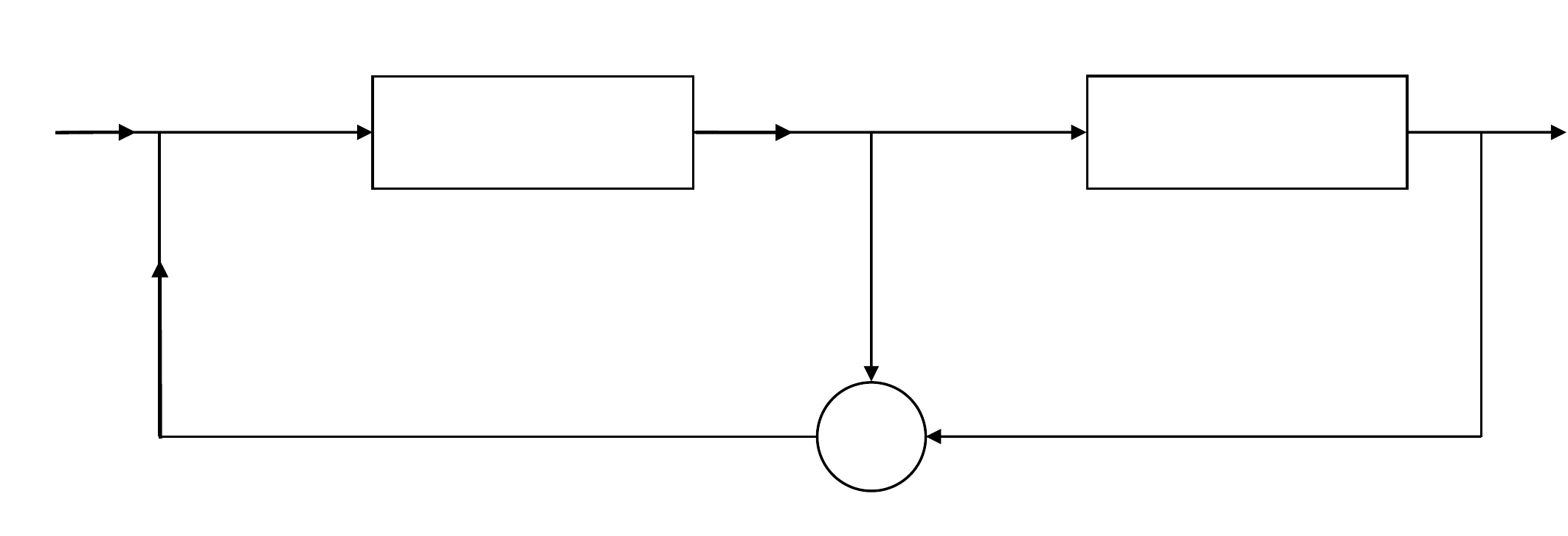}
 \put (-1,25) {$x_0$}
 \put (26.5,31) {amplifier}
 \put (72,31) {quantizer}
 \put (30.5,24.5) {\large$\displaystyle\times\beta$}
 \put (56.4,28) {$\displaystyle\beta x_{n-1}$}
 \put (101,24.8) {$b_n$}
 \put (63,3) {$b_n$}
 \put (16,3) {$x_n = \beta x_{n-1} - b_n$}
 \put (44,13) {$\displaystyle\beta x_{n-1}$}
 \put (75,24.7) {$Q_u(\cdot)$}
 \put (54.1,5.4) {$-$}
\end{overpic}
\caption{Iteration process of the $\beta$-encoder.}
\label{ch1fig1}
\end{figure}

\medskip
In recent years $\beta$-encoders were also considered as sources for random number generation, see \cite{JMKA,SJO,JM,KJ16}. If $x$ is chosen uniformly at random in $[0,1]$, then the digits $\{a_n(x)\}_{n\ge 1}$ from \eqref{binreps} form a sequence of binary independent identically distributed random variables with $\mathbb P(a_n=0)=\mathbb P(a_n=1)=\frac 12$. On the other hand, it is known that successive bits $\{b_n\}$ in the output of a $\beta$-encoder are  correlated and therefore not immediately applicable as pseudo-random numbers. Under the assumption that the amplification factor $\beta$ does not fluctuate, Jitsumatsu and Matsumura proposed in \cite{JM} a coding scheme which `removes' the dependence between the bits and converts the output bits $\{b_n\}$ of the $\beta$-encoder into the binary digits $\{a_n\}$ in base 2 of the number it represents. It is verified in \cite{JM} that the resulting output sequences $\{a_n\}$ pass the NIST statistical test suite from \cite{rukhlin01}, which shows that this method performs well as a pseudo-random number generator. A natural question asked in \cite{JM} is the following: If we use $\bm u = (u_n)_{n \ge 1}$ to denote the consecutive (random) threshold values $u_n$, what is the number $k(m,\bm u, x)$ of bits $\{b_n\}$ from the $\beta$-encoder that are necessary to obtain $m$ digits in base 2 of the number $x$ via this process? In \cite{JM} the lower bound $k(m,\bm u, x) \ge \frac{m\log 2}{\log \beta}$ was found.\footnote{This bound was found in \cite{JM} for bits $\{b_n\}$ from a \emph{scale-adjusted} $\beta$-encoder, that is, if the iteration scheme is given by \eqref{eq:binexpanster} but with $u \in [\beta-1,1]$ and $x_n = \beta x_{n-1} - (\beta-1)b_n$. This difference is not principal in the first three sections where the amplification factor is assumed to be fixed. However, in reality the amplifier and scale-adjuster are subject to noise as well, and to minimize this influence we therefore consider a model without scale-adjuster.} The authors of \cite{JM} remarked that a theoretical analysis of the expected value of $k(m,\bm u, x)$ is relevant as an indication of the efficiency of the proposed pseudo-random number generator.

\medskip
The question from \cite{JM} is reminiscent of the considerations of Lochs in \cite{lochs} from 1964, where Lochs asked how many regular continued fraction digits of a real number $x$ one can determine from knowing only the first $n$ decimal digits of $x$. If we call this number of digits $m_L(n,x)$, then Lochs' Theorem states that for Lebesgue almost every $x \in [0,1]$,
\begin{align}\label{q:lochs}
\lim_{n \rightarrow \infty} \frac{m_L(n,x)}{n} = \frac{6 \log 2 \log 10}{\pi^2}.
\end{align}
The somewhat mysterious expression on the right-hand side turns out to be a ratio of \emph{entropies} of the interval maps $T(x)=10x\bmod 1$ and $S(x)= 1/x \bmod 1$ that generate the decimal expansions and regular continued fraction expansions, respectively. Lochs' result was extended in \cite{dajani01} to other types of number expansions including binary expansions and $\beta$-expansions by placing it in a dynamical systems framework, see also \cite{bosma99}. These results are further generalized in \cite{randomlochs22} to number expansions generated by random dynamical systems. Unfortunately the results from \cite{lochs,bosma99,dajani01,randomlochs22} do not immediately apply to the question from \cite{JM} due to the uncertainty in the threshold value $u$. In this article we adapt the methods from \cite{dajani01,randomlochs22} to the specific iteration scheme of the $\beta$-encoder.

\medskip
The first goal of this article is to address the question posed in \cite{JM}. In our first main result we recover the lower bound from \cite{JM} and we obtain a statement on an upper bound for $k(m,\bm u, x)$. More precisely, we obtain the following results. Here $\lambda$ denotes the one-dimensional Lebesgue measure.

\begin{theorem}\label{t:main}
Consider  $\beta \in (1,2)$ and a sequence of thresholds $\bm u = (u_n)_{n \ge 1} \in [1,(\beta-1)^{-1}] ^\mathbb N$. For all $x \in [0,1]$ and all $m \in \mathbb{N}$ it holds that
\begin{equation}\label{q:lowerboundk}
k(m,\bm u,x) \ge \frac{m\log 2}{\log \beta}.
\end{equation}
Moreover, for each $\varepsilon \in (0,1)$ there exists a constant $C(\varepsilon) > 0$ such that for all $m \in \mathbb{N}$
\begin{equation}\label{q:upperboundk}
\lambda\Big( \Big\{ x  \in [0,1] \, : \, k(m,\bm u ,x)  - \frac{m \log 2}{\log \beta} > C(\varepsilon) \Big\} \Big) < \varepsilon.
\end{equation}
\end{theorem}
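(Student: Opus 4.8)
The plan is to recast the statement in terms of the enclosure interval that the decoder can guarantee after reading finitely many output bits, and then to reduce both bounds to elementary estimates on lengths and measures. Having read $b_1,\dots,b_\ell$ and knowing only $\beta$ (but not the realised thresholds $\bm u$), one can write
\[
x=\sum_{n=1}^\ell\frac{b_n}{\beta^n}+\frac1{\beta^\ell}\sum_{n\ge1}\frac{b_{\ell+n}}{\beta^n},\qquad \sum_{n\ge1}\frac{b_{\ell+n}}{\beta^n}\in\Big[0,\tfrac1{\beta-1}\Big],
\]
so that $x$ lies in the interval $J_\ell=\big[s_\ell,\,s_\ell+\tfrac1{\beta-1}\beta^{-\ell}\big]$, where $s_\ell=\sum_{n=1}^\ell b_n\beta^{-n}$, of length exactly $\tfrac1{\beta-1}\beta^{-\ell}$. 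Writing $D_m(x)$ for the level-$m$ dyadic interval containing $x$, which has length $2^{-m}$ and on which $a_1,\dots,a_m$ are constant, the first $m$ base-2 digits of $x$ are determined by $b_1,\dots,b_\ell$ exactly when $J_\ell\subseteq D_m(x)$, so that $k(m,\bm u,x)=\min\{\ell:J_\ell\subseteq D_m(x)\}$.

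With this reformulation \eqref{q:lowerboundk} is immediate: if $J_\ell\subseteq D_m(x)$ then comparing lengths yields $\tfrac1{\beta-1}\beta^{-\ell}\le 2^{-m}$, and since $\beta<2$ gives $\tfrac1{\beta-1}\ge1$ we obtain $\beta^{-\ell}\le 2^{-m}$, i.e.\ $\ell\ge \tfrac{m\log2}{\log\beta}$; applying this to the minimal such $\ell$ gives the bound for every $x$ and every $\bm u$.

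For \eqref{q:upperboundk} I would fix $\ell=\big\lceil\tfrac{m\log2}{\log\beta}\big\rceil+C$ and estimate the measure of the set on which $J_\ell$ fails to lie inside $D_m(x)$. For this $\ell$ one has the uniform bound $\lambda(J_\ell)\le \tfrac{1}{\beta-1}\beta^{-C}\,2^{-m}=:\rho$. If $J_\ell\not\subseteq D_m(x)$, then, being an interval containing $x$ that pokes out of $D_m(x)$, $J_\ell$ must contain one of the dyadic endpoints $j2^{-m}$; since $x\in J_\ell$ and $\lambda(J_\ell)\le\rho$, this forces $x$ within distance $\rho$ of some $j2^{-m}$ with $0\le j\le 2^m$. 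The union of these $2^m+1$ neighbourhoods has measure at most $(2^m+1)\cdot2\rho\le 4(\beta-1)^{-1}\beta^{-C}$, which is independent of $m$. Choosing $C$ with $4(\beta-1)^{-1}\beta^{-C}<\varepsilon$ bounds the measure of $\{x:k(m,\bm u,x)>\ell\}$ by $\varepsilon$, and since $\ell\le\tfrac{m\log2}{\log\beta}+C+1$ this gives \eqref{q:upperboundk} with $C(\varepsilon):=C+1$.

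The step I expect to require the most care is the reformulation itself: one must argue that the information available to the decoder is precisely the coarse tail enclosure $J_\ell$ of length $\tfrac1{\beta-1}\beta^{-\ell}$, and not the finer cylinder one would obtain by also exploiting the values $u_1,\dots,u_\ell$. This is essential, since short boundary cylinders can be absorbed into a dyadic interval after very few bits, which would violate the universal lower bound \eqref{q:lowerboundk}. Once $J_\ell$ is identified, the $m$-independence of $C(\varepsilon)$ is transparent: the enclosure shrinks at the fixed geometric rate $\beta^{-\ell}$ while the dyadic grid has exact spacing $2^{-m}$, so the overshoot $k(m,\bm u,x)-\tfrac{m\log2}{\log\beta}$ is controlled solely by the position of $x$ relative to the dyadic grid.
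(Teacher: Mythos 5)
Your proposal is correct and follows essentially the same route as the paper: the reformulation via the enclosure interval $J_\ell$ is exactly the paper's $\mathcal I_\ell(\bm u,x)$ and its definition \eqref{eq11} of $k(m,\bm u,x)$, the lower bound is the same length comparison using $(\beta-1)^{-1}\ge 1$, and the upper bound rests on the same geometric fact that failure of containment forces $x$ to lie within the enclosure's length of a dyadic endpoint. The only difference is organizational: you cover the exceptional set by $\rho$-neighbourhoods of the $2^m+1$ gridpoints, whereas the paper equivalently carves out the good set $C_m$ by trimming $\tfrac{\varepsilon}{2}2^{-m}$ from each dyadic interval and bounds the overshoot there.
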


From these bounds we obtain the following corollary on the asymptotic behaviour of the sequences $(k(m,\bm u, x))_{m \ge 1}$. 

\begin{cor}\label{c:main}
For any real positive sequence $(n_m)_{m \in \mathbb{N}}$ with $\lim_{m \rightarrow \infty} n_m = \infty$, each $\bm u \in [1,(\beta - 1)^{-1}]^\mathbb N$ and $\varepsilon > 0$ it holds that
\[ \lim_{m \rightarrow \infty} \lambda\Big( \Big\{ x \in [0,1] \, :\, \frac{1}{n_m}\Big|k(m,\bm u,x)- \frac{m \log 2}{\log \beta}\Big| > \varepsilon \Big\}\Big) = 0,\]
i.e., the sequence $ \big( \frac1{n_m} \big(k(m,\bm u,x)- \frac{m\log 2}{\log \beta}\big) \big)_{m\ge 1}$ converges to 0 in $\lambda$-probability.
\end{cor}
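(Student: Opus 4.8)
The plan is to deduce the corollary directly from the two-sided control on $k(m,\bm u,x)$ furnished by Theorem~\ref{t:main}, so no new dynamics are required. Write $D_m(x) := k(m,\bm u,x) - \frac{m\log 2}{\log \beta}$. The lower bound \eqref{q:lowerboundk} says that $D_m(x) \ge 0$ for every $x \in [0,1]$ and every $m$, so in particular $|D_m(x)| = D_m(x)$ and the set appearing in the corollary equals $\{x \in [0,1] : D_m(x) > \varepsilon n_m\}$. It therefore suffices to show that $\lambda(\{x \in [0,1] : D_m(x) > \varepsilon n_m\}) \to 0$ as $m \to \infty$.

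Next I would invoke the upper bound \eqref{q:upperboundk}, being careful to exploit that its constant is uniform in $m$. Fix an arbitrary $\delta \in (0,1)$; Theorem~\ref{t:main} then furnishes a constant $C(\delta) > 0$ such that
\[ \lambda\big(\{x \in [0,1] : D_m(x) > C(\delta)\}\big) < \delta \qquad \text{for all } m \in \mathbb N.\]
Since $n_m \to \infty$ and $\varepsilon > 0$ is fixed, there is an index $M = M(\delta,\varepsilon)$ such that $\varepsilon n_m > C(\delta)$ for all $m \ge M$. For these $m$, the strict inequality $\varepsilon n_m > C(\delta)$ yields the inclusion $\{x : D_m(x) > \varepsilon n_m\} \subseteq \{x : D_m(x) > C(\delta)\}$, whence $\lambda(\{x : D_m(x) > \varepsilon n_m\}) < \delta$.

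Letting $m \to \infty$ then gives $\limsup_{m \to \infty} \lambda(\{x \in [0,1] : \frac1{n_m}|D_m(x)| > \varepsilon\}) \le \delta$, and since $\delta \in (0,1)$ was arbitrary the limsup equals $0$, which establishes the claimed convergence to $0$ in $\lambda$-probability. The only point requiring attention---and it is the crux of the argument rather than a genuine obstacle---is that the threshold $M$ can be chosen depending only on $\delta$ and $\varepsilon$, which is precisely what the uniformity of $C(\delta)$ over all $m$ in \eqref{q:upperboundk} secures; without that uniformity the reduction would break down. Everything else is routine.
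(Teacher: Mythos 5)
Your proposal is correct and follows essentially the same argument as the paper's proof: both use the lower bound \eqref{q:lowerboundk} to drop the absolute value and reduce to a one-sided tail estimate, and then exploit the $m$-uniformity of the constant $C(\delta)$ in \eqref{q:upperboundk} together with $n_m \to \infty$ to choose a threshold $M(\delta,\varepsilon)$ beyond which the tail set has measure less than $\delta$. Your remark that the uniformity of $C(\delta)$ over $m$ is the crux is exactly the point the paper's proof relies on as well.
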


\noindent In particular, the above corollary has the following implications:\\
\begin{itemize}
\item Taking $n_m = \sqrt m$ for each $m$ gives a Central Limit Theorem result where the limiting distribution has zero variance;\\
\item Taking $n_m = m$ for each $m$ we retrieve a limit statement in the spirit of \eqref{q:lochs}, but with convergence in probability instead of almost surely.
\end{itemize}
\medskip
By adjusting the setup from \cite{dajani01} to suit our purposes, we obtain the stronger result of almost sure convergence for the specific sequence $(n_m)_{m \ge 1}$ with $n_m=m$ for each $m$ that is stated in the next theorem.
\begin{theorem}\label{t:main2}
For each $\bm u \in [ 1, (\beta-1)^{-1}]^\mathbb N$, it holds that
\[ \lim_{m \to \infty} \frac{k(m, \bm u, x)}{m} = \frac{\log 2}{\log \beta} \qquad \text{for $\lambda$-a.e.~$x \in [0,1]$.}\]
\end{theorem}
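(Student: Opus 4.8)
The plan is to combine the deterministic lower bound of Theorem~\ref{t:main} with an almost sure upper bound proved by a Borel--Cantelli covering argument, thereby upgrading the convergence in probability of Corollary~\ref{c:main} (case $n_m=m$) to almost sure convergence. Throughout I write $\Delta_k=\Delta_k(\bm u,x)$ for the fundamental interval of rank $k$ of the $\beta$-encoder, i.e.\ the set of all $y\in[0,1]$ producing the same first $k$ output bits $b_1,\dots,b_k$ as $x$ under the threshold sequence $\bm u$, and $D_m(x)$ for the dyadic interval of rank $m$ containing $x$. With this notation $k(m,\bm u,x)$ is exactly the least $k$ with $\Delta_k\subseteq D_m(x)$, because knowing $b_1,\dots,b_k$ pins $x$ to $\Delta_k$ and the first $m$ base-$2$ digits are determined precisely when every point of $\Delta_k$ shares them; moreover $\Delta_{k+1}\subseteq\Delta_k$, so this least $k$ is well defined.

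The first step is a uniform length estimate on the cylinders. Each encoder map $T_u(y)=\beta y-Q_u(\beta y)$ is piecewise affine with both branches of slope $\beta$, and a short induction shows that for every admissible threshold the orbit $(x_n)$ stays inside the invariant interval $I=[0,(\beta-1)^{-1}]$. Hence the composition $T_{u_k}\circ\cdots\circ T_{u_1}$ is affine of slope $\beta^k$ on $\Delta_k$ and maps it into $I$, so that
\begin{equation*}
|\Delta_k|=\beta^{-k}\,\big|T_{u_k}\circ\cdots\circ T_{u_1}(\Delta_k)\big|\le \frac{\beta^{-k}}{\beta-1},
\end{equation*}
a bound uniform in $x$ and in $\bm u$.

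The heart of the matter is the upper bound. Fix $\delta>0$ and set $k_m=\lceil(1+\delta)\tfrac{m\log 2}{\log\beta}\rceil$, so that $\beta^{-k_m}\le 2^{-(1+\delta)m}$. For a non-dyadic $x$ one has $k(m,\bm u,x)>k_m$ iff $\Delta_{k_m}\not\subseteq D_m(x)$, which (since $x$ lies in the interior of $D_m(x)$ and $\Delta_{k_m}$ is an interval) forces $\Delta_{k_m}$ to contain an endpoint $j2^{-m}$ of $D_m(x)$. As the fundamental intervals of rank $k_m$ partition $[0,1]$, each of the $2^m-1$ interior dyadic points lies in at most one of them, so the bad set $B_m=\{x:k(m,\bm u,x)>k_m\}$ is, up to the null set of dyadic $x$, a union of at most $2^m-1$ cylinders, each of length at most $(\beta-1)^{-1}\beta^{-k_m}$. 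Therefore
\begin{equation*}
\lambda(B_m)\le (2^m-1)\,\frac{\beta^{-k_m}}{\beta-1}\le \frac{1}{\beta-1}\,2^{-\delta m},
\end{equation*}
which is summable in $m$. By Borel--Cantelli, $\lambda(\limsup_m B_m)=0$, so for a.e.\ $x$ one eventually has $k(m,\bm u,x)\le k_m$ and hence $\limsup_m\tfrac{k(m,\bm u,x)}{m}\le(1+\delta)\tfrac{\log 2}{\log\beta}$. Intersecting the resulting full-measure sets over $\delta=1/j$, $j\in\mathbb N$, gives $\limsup_m\tfrac{k(m,\bm u,x)}{m}\le\tfrac{\log 2}{\log\beta}$ almost surely; together with the deterministic lower bound $\tfrac{k(m,\bm u,x)}{m}\ge\tfrac{\log 2}{\log\beta}$ from \eqref{q:lowerboundk}, valid for all $x$, this yields the claimed limit.

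I expect the principal obstacle to be bookkeeping rather than a deep estimate: one must carefully justify that the rank-$k$ fundamental intervals genuinely form a partition of $[0,1]$ into subintervals (so the counting-by-endpoints argument is legitimate even as the thresholds $u_n$ vary), and one must dispose of the measure-zero sets of dyadic $x$ and of points landing exactly on a threshold preimage, where the digit assignment is ambiguous. Once the uniform bound $|\Delta_k|\le(\beta-1)^{-1}\beta^{-k}$ is secured, the probabilistic content collapses to the elementary summability above, which is precisely what allows one to bypass the ergodic-theoretic machinery of \cite{dajani01}: the fluctuating thresholds $\bm u$ preclude a single stationary generating system, but the explicit geometry of the encoder supplies the required length control directly.
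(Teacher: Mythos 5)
There is a genuine gap, and it lies in your very first paragraph. You identify $k(m,\bm u,x)$ with the least $k$ such that $\Delta_k(\bm u,x)\subseteq \mathcal D_m(x)$, where $\Delta_k$ is the set of points whose encoder output under the \emph{known} threshold sequence $\bm u$ agrees with that of $x$. But the paper's definition \eqref{eq11} uses $\mathcal I_k(\bm u,x)=\mathcal I_{(b_1,\ldots,b_k)}$, the full interval of length $\beta^{-k}(\beta-1)^{-1}$ of all points admitting $b_1,\ldots,b_k$ as a prefix of \emph{some} $\beta$-expansion: the decoder sees the bits but not the thresholds, which is the whole point of the uncertainty in $u$. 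One always has $\Delta_k\subseteq \mathcal I_k(\bm u,x)$, typically strictly, and $|\Delta_k|$ can be much smaller than $\beta^{-k}$ (cylinders need not be full). Concretely, for $\beta=\frac{1+\sqrt5}{2}$, $u_n\equiv 1$ and $x=0.8$ one gets $b_1=1$, $\Delta_{(1)}=[1/\beta,1]\subseteq[\frac12,1]=\mathcal D_1(x)$, so your quantity equals $1$ — which already violates the lower bound \eqref{q:lowerboundk} since $\frac{\log 2}{\log\beta}\approx 1.44$ — whereas $\mathcal I_{(1)}=[1/\beta,\,1/\beta+1]\not\subseteq \mathcal D_1(x)$, so the paper's $k(1,\bm u,x)\ge 2$. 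Consequently your two halves talk about different quantities: the lower bound you import from Theorem~\ref{t:main} holds for the paper's $k$ but fails for yours, while your Borel--Cantelli upper bound controls your (smaller) quantity and hence does not transfer to the paper's $k$; the inequality points the wrong way. Moreover, your counting step genuinely relies on the partition property of the $\Delta_k$'s, which the intervals $\mathcal I_{(b_1,\ldots,b_k)}$ do not enjoy: they overlap (this is precisely why almost every point has uncountably many $\beta$-expansions), and a single dyadic point can lie in very many of them, so ``each dyadic point lies in at most one'' is false in the setting the theorem is actually about.

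The repair is small and leads you exactly to the paper's proof. Since $x\in\mathcal I_{k_m}(\bm u,x)$ and $\lambda(\mathcal I_{k_m}(\bm u,x))=\beta^{-k_m}(\beta-1)^{-1}$ by \eqref{q:sizei}, the event $\mathcal I_{k_m}(\bm u,x)\not\subseteq\mathcal D_m(x)$ forces this interval to contain an endpoint of $\mathcal D_m(x)$, hence forces $x$ to lie within distance $\beta^{-k_m}(\beta-1)^{-1}$ of one of the $2^m+1$ dyadic points of order $m$. So the bad set has measure at most $2(2^m+1)\beta^{-k_m}(\beta-1)^{-1}=O(2^{-\delta m})$, which is summable, and your Borel--Cantelli conclusion, together with the (now legitimately applicable) lower bound \eqref{q:lowerboundk}, finishes the argument. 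In other words: replace counting of cylinders containing dyadic points by counting of strips around dyadic points, and drop the appeal to any partition structure; everything else in your skeleton survives.
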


More specifically, for typical $x$ and large $N$ one needs approximately $N\frac{ \log 2}{\log \beta}$ output bits of the $\beta$-encoder to obtain $N$ correct binary digits.

\medskip
Since the implementation of $\beta$-encoders it has been observed that (like for the threshold value $u$) there is uncertainty about the precise value of $\beta$ during the encoding process. The actual value of $\beta$ can only be determined to lie within an interval $[ \beta_{\min}, \beta_{\max} ]$. Possible solutions to this problem were studied in \cite{DGWY,DY06,ward}. We will argue that in this case one is not able to extract a large number of digits $(a_1,\ldots,a_n)$ in the base 2 expansion of the input value $x$ using the output bits $(b_1,\ldots,b_m)$ from the $\beta$-encoder. Nevertheless, the output bits $(b_1,\ldots,b_m)$ are still sufficiently random, and using modern cryptographic techniques, one is still able to extract $n$ nearly independent bits from $\gamma n\frac{\log 2}{\log \beta}$ output bits, where $\gamma>1$ is a fixed factor, which depends on how close to `nearly independent' the final output bits should be.

\medskip
The article is organised as follows. In the next section we introduce the necessary notation and preliminaries on base 2 expansions and $\beta$-expansions. In Section \ref{sec3} we prove Theorem \ref{t:main}, Corollary \ref{c:main} and Theorem \ref{t:main2}. Here it is assumed that the amplification factor is fixed and only the threshold value fluctuates. Finally, in Section \ref{sec4} we discuss modern cryptographic techniques to apply for the case that the amplification factor fluctuates as well.

\section{Preliminaries}\label{s:prereq}

For a set $A$ and an integer $m \ge 1$ we use the notation $A^m = \{ (a_1, \ldots, a_m) \, : \, a_i \in A, \, 1 \le i \le m\}$ and $A^\mathbb N = \{ (a_k)_{k \ge 1} \, : \, a_k \in A, \, k \ge 1 \}$. If $I$ is an interval in the real line, then we write $\partial I$ for the set containing the two boundary points of $I$ and we use $I^-$ and $I^+$ to denote the left and right endpoints of $I$, respectively.

\medskip
For each $m \geq 1$ the collection of {\em dyadic intervals of order $m$} is given by
\[ \mathcal D_m = \Big\{ \Big[ \frac{k}{2^m}, \frac{k+1}{2^m} \Big) \, : \, 0 \le k \le 2^m-1 \Big\}.
\]
If we write the point $\frac{k}{2^m} = \sum_{i=1}^m \frac{d_i}{2^i}$, $d_i \in \{0,1\}$, in its binary expansion, then we see that the interval $\big[ \frac{k}{2^m}, \frac{k+1}{2^m} \big)$ contains precisely those $x \in [0,1)$ that have $d_1, \ldots, d_m$ as their first $m$ binary digits. For each $x\in [0,1)$ and each $m \ge 1$ there is a unique element of $\mathcal D_m$ that contains $x$. We denote this interval by $\mathcal D_m(x)$. Then
\begin{equation}\label{q:sized}
\lambda (\mathcal D_m(x)) = 2^{-m}.
\end{equation}
Hence, each collection $\mathcal{D}_m$ is a partition of $[0,1)$ by intervals of length $2^{-m}$. By adding the point 1 to the last interval of $\mathcal{D}_m$ we obtain a partition of the closed interval $[0,1]$ without disturbing any of the properties mentioned above.

\medskip
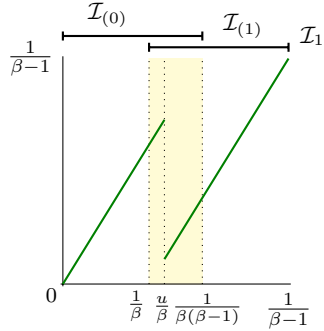
\begin{figure}[h]
\begin{tikzpicture}[scale=3]
\filldraw[fill=yellow!20, draw=yellow!20] (.3819,0) rectangle (.618,1);
\draw[dotted] (.3819,0)--(.3819,1)(.618,0)--(.618,1);
\draw (-.01,0)--(.325,0)node[below=-1pt]{\footnotesize $\frac{1}{\beta}$}--(.44,0)node[below=1pt]{\footnotesize $\frac{u}{\beta}$}--(.64,0)node[below]{\footnotesize $\frac{1}{\beta(\beta-1)}$}--(1,0)node[below]{$\footnotesize \frac{1}{\beta-1}$}--(1.01,0)(0,-.01)--(0,1)node[left]{$\footnotesize \frac{1}{\beta-1}$}--(0,1.01);
\node at (-.05,-.05) {\footnotesize 0};
\draw[dotted] (.45,0)--(.45,1);
\draw[thick,green!50!black](0,0)--(.45,.728)(.45,.1101)--(1,1);
\draw[thick] (0,1.1)--(.618,1.1)(0,1.08)--(0,1.12)(.618,1.08)--(.618,1.12);
\node[above] at (.2,1.1) {\small $\mathcal I_{(0)}$};
\draw[thick] (.3819,1.05)--(1,1.05)(.3819,1.07)--(.3819,1.03)(1,1.07)--(1,1.03);
\node[above] at (.8,1.05) {\small $\mathcal I_{(1)}$};
\node[right=1pt] at (1,1.1) {\small $\mathcal I_1$};
\end{tikzpicture}
\caption{The graph of one of the maps $T_u$ is shown for $\beta = \frac{1+\sqrt 5}{2}$, the golden mean. The yellow area in the middle relates to the interval in which the threshold value $u$ may be chosen. At the top we see the two intervals $\mathcal I_{(0)}$ and $\mathcal I_{(1)}$ that are the elements of the cover $\mathcal I_1$.}
\label{f:Tu}
\end{figure}

\medskip
Usually A/D-converters rely on binary expansions of numbers to produce good approximations of the input signal. The $\beta$-encoder is based on $\beta$-expansions instead. Fix a value of $\beta \in (1,2)$. An expression of the form
\[ x = \sum_{n \ge 1} \frac{b_n}{\beta^n}, \quad b_n \in \{0,1\},\]
is called a $\beta$-expansion of $x$. The set of numbers that can be written in this way is equal to the interval $\big[ 0, \frac1{\beta-1} \big]$.
We now briefly explain how one can get a $\beta$-expansion of a number $x$
 from the $\beta$-encoder introduced in the introduction with varying threshold values $u_n$.

\medskip
For each $u \in [1,(\beta - 1)^{-1}]$ define the interval map $T_u: [0,(\beta - 1)^{-1}] \rightarrow [0,(\beta - 1)^{-1}]$ by
\begin{equation}\label{q:defTu}
T_u(y) = \begin{cases}
\beta y, & \text{if } y < \frac{u}{\beta},\\
\beta y - 1, & \text{if }y \ge \frac{u}{\beta}.
\end{cases}
\end{equation}
The graph of such a map is shown in Figure~\ref{f:Tu}. If we let $u_n$ denote the threshold value of the quantiser at time $n$, then the dynamics of the $\beta$-encoder can be represented as
\begin{equation}\label{q:betaorbit}
x_n = T_{u_n}(x_{n-1}) = T_{u_n} \circ \cdots \circ T_{u_1}(x), \quad n \geq 1.
\end{equation}
For each $n \ge 1$, set $b_n = b_n(x) = 0$ if $\beta x_{n-1} < u_n$ and 1 otherwise. Putting $x_0 = x$, then for each $n \ge 1$,
\[ T_{u_n} (x_{n-1}) = \beta x_{n-1} - b_n,\]
so that
\[ x = \sum_{i=1}^n \frac{b_i}{\beta^i} + \frac{T_{u_n} \circ \cdots \circ T_{u_1}(x)}{\beta^n}.\]
Since $T_{u_n} \circ \cdots \circ T_{u_1}(x) \in [0,(\beta-1)^{-1}]$ holds for each $n$, we immediately conclude that   $x = \sum_{n=1}^\infty \frac{b_n}{\beta^n}$. From Figure~\ref{f:Tu} it becomes clear that each threshold value $u_n$ must lie in the interval $[1,(\beta - 1)^{-1}]$ to obtain a recursive process and bits that correspond to $\beta$-expansions. It follows from \cite[Theorem 2]{DV05}, where for the case that $\beta \in (1,2)$ only the choices $u_n \in \{1,(\beta-1)^{-1}\}$ for each $n \geq 1$ are considered, that in fact all $\beta$-expansions can be generated using the above iteration process.

\begin{remark}{\rm Note that if one starts this process with a number $x\in [0,1]$, then typically $x_n >1$ for many $n$. The reason to look at $x \in [0,1]$ instead of $x \in [0,\frac{1}{\beta-1}]$ is to make the comparison with the dyadic intervals $\mathcal{D}_m(x)$, which are defined on $[0,1]$, easier.} 
\end{remark}

\medskip
Given the first $k$ output bits $b_1, \ldots, b_k$ of the $\beta$-encoder, we know that the input signal $x \in [0,1]$ has to satisfy
\[ x \in \Big[ \sum_{n=1}^k \frac{b_n}{\beta^n}, \sum_{n=1}^k \frac{b_n}{\beta^n} + \sum_{n \ge k+1} \frac1{\beta^k} \Big] =  \Big[ \sum_{n=1}^k \frac{b_n}{\beta^n}, \sum_{n=1}^k \frac{b_n}{\beta^n} + \frac1{\beta^k (\beta-1)} \Big].\]
For each $b_1, \ldots, b_k \in \{0,1\}$ define
\[ \mathcal I_{(b_1, \ldots, b_k)} = \Big[ \sum_{n=1}^k \frac{b_n}{\beta^n}, \sum_{n=1}^k \frac{b_n}{\beta^n} + \frac1{\beta^k(\beta-1)} \Big].\]
Comparable to the partitions $\mathcal D_m$ for binary expansions, we consider for each $k \ge 1$ the {\em cover} $\mathcal I_k$ of $[0,(\beta-1)^{-1}]$ associated to $\beta$-expansions given by
\[ \mathcal I_k = \{ \mathcal I_{(b_1, \ldots, b_k)} \, : \, b_i \in \{0,1\}, \, 1 \le i\le k \}.\]
See Figure~\ref{f:Tu} for an illustration of $\mathcal I_1 = \{ \mathcal I_{(0)}, \mathcal I_{(1)} \}$.

\medskip
If for $k \ge 1$ the first $k$ output bits of the $\beta$-encoder for an input signal $x \in [0,1]$ and a threshold value sequence $\bm u \in [1,(\beta -1)^{-1}]^\mathbb N$ are $b_1, \ldots, b_k$, then we set
\begin{equation*}
 \mathcal I_k(\bm u, x)= \mathcal I_{(b_1, \ldots, b_k)},
\end{equation*}
since the information that the bits $b_1, \ldots, b_k$ give us is that $x$ is contained in this interval. Note that
\begin{equation}\label{q:sizei}
\lambda(\mathcal I_k(\bm u, x)) = \frac1{\beta^k(\beta-1)}.
\end{equation}
Furthermore, 
\begin{equation}\label{eq11}
k(m ,\bm u, x) = \inf \{ k\ge 1 \, : \, \mathcal I_k(\bm u,x) \subseteq \mathcal D_m(x)\}.
\end{equation}

\section{Fixed amplification factor}\label{sec3}

In this section we prove our first main results where the amplification factor is assumed to be fixed. We start with the proof of Theorem~\ref{t:main}, which provides bounds for the quantities $k(m, \bm u,x)$. This proof is inspired by the proof of \cite[Theorem 2.3]{atilla}.

\begin{proof}[Proof of Theorem~\ref{t:main}]
Fix $\bm u = (u_n)_{n \ge 1} \in [ 1, (\beta-1)^{-1}]^\mathbb N$. For all $m \in \mathbb N$ and $x \in [0,1]$ we find by \eqref{q:sized} and \eqref{q:sizei} that $\lambda(\mathcal D_m(x))=2^{-m}$ and $\lambda(\mathcal I_{k(m,\bm u,x)}(\bm u, x))=\beta^{-k(m,\bm u,x)}(\beta-1)^{-1}$. Hence,
\begin{equation}\label{q:kmux} \begin{split}
k(m,\bm u,x) - \frac{m \log 2}{\log \beta} + \frac{\log(\beta-1)}{\log \beta}
=\ & \frac1{\log \beta} \cdot \log\bigg(\frac{\lambda(\mathcal D_m(x))}{ \lambda(\mathcal I_{k(m,\bm u,x)}(\bm u, x))}\bigg).
\end{split}\end{equation}
Furthermore, by the definition of $k(m,\bm u,x)$ we have $\mathcal I_{k(m,\bm u,x)}(\bm u, x) \subseteq \mathcal D_m(x)$ and since $\beta \in (1,2)$ the above yields
\[ k(m,\bm u,x) \ge \frac{m\log 2}{\log \beta} -\frac{\log(\beta-1)}{\log \beta} > \frac{m\log 2}{\log \beta}.\]
This gives \eqref{q:lowerboundk}. 

\medskip
For \eqref{q:upperboundk} let $\varepsilon \in (0,1)$ and fix some integer $m \ge 1$. By the definition of $k(m,\bm u,x)$ we have that $\mathcal I_{k(m,\bm u,x)-1}(\bm u, x) \nsubseteq \mathcal D_m(x)$. Hence, the distance between $x$ and the nearest boundary point of $\mathcal D_m(x)$, denoted by $|x-\partial \mathcal D_m(x)|$, is at most equal to $\lambda(\mathcal I_{k(m,\bm u,x)-1}(\bm u, x))$. Furthermore, we have
\[ \log \lambda(\mathcal I_{k(m,\bm u,x)-1}(\bm u, x)) - \log \lambda(\mathcal I_{k(m,\bm u,x)}(\bm u, x)) = \log \beta.\]
Together this gives that
\begin{equation}\label{q:DI}
\log\Big(\frac{\lambda(\mathcal D_m(x))}{ \lambda(\mathcal I_{k(m,\bm u,x)}(x))}\Big) \le \log \lambda\big(\mathcal D_m(x)\big) + \log \beta - \log |x-\partial \mathcal D_m(x)|.
\end{equation}
We slightly adjust the intervals in $\mathcal D_m$ by removing small intervals at the endpoints: For each $m \in \mathbb N$ and interval $J \in \mathcal D_m$, let $J'$ be the interval obtained by removing on both ends of $J$ an interval of length $\frac{\varepsilon}{2} \cdot 2^{-m}$ and let $C_m = \bigcup_{J \in \mathcal D_m} J'$. Then $\lambda(J') = (1-\varepsilon ) \cdot 2^{-m}$ and $\lambda(C_m) = 1 - \varepsilon$. For $x \in C_m$ we have the bound $|x- \partial \mathcal D_m(x)| \ge \frac{\varepsilon}{2}\lambda(\mathcal D_m(x))$. Combining this with \eqref{q:kmux} and \eqref{q:DI} gives for each integer $m \in \mathbb{N}$ and each $x \in C_m$ that
\[ k(m,\bm u,x) - \frac{m\log 2}{\log \beta} \le \frac{\log\frac{2}{\varepsilon}}{\log \beta} + 1.\]
Hence, we obtain \eqref{q:upperboundk} with constant $C(\varepsilon) = \frac{\log\frac{2}{\varepsilon}}{\log \beta} + 1$.
\end{proof}

Theorem~\ref{t:main} gives bounds on the value of $k(m, \bm u,x)$ and immediately leads to the  statement on the asymptotics of the sequence $(k(m, \bm u,x))_{m \ge 1}$ from Corollary~\ref{c:main} that we prove next.

\begin{proof}[Proof of Corollary~\ref{c:main}]
Let $(n_m)_{m \ge 1}$ be a sequence of positive real numbers that satisfy $\lim_{m \to \infty} n_m = \infty$. From \eqref{q:lowerboundk} we get that for each $x \in [0,1]$ and $m \in \mathbb{N}$,
\[ \frac1{n_m} \bigg( k(m, \bm u, x) - \frac{m \log 2}{\log \beta} \bigg) \ge 0.\]
Hence, it suffices to show that for all $\delta, \varepsilon > 0$ there exists an $M \in \mathbb N$ such that for all $m \ge M$ we have
\[ \lambda\Big( \Big\{ x\in [0,1] \, : \,  \frac{1}{n_m} \Big(k(m,\bm u,x)- \frac{m\log 2}{\log \beta} \Big)  > \varepsilon \Big\}\Big) < \delta.\]
This immediately follows from \eqref{q:upperboundk} by taking $M \in \mathbb{N}$ big enough such that $\frac{C(\delta)}{n_m} \le \varepsilon$ for all $m \geq M$, which is possible because $\lim_{m \rightarrow \infty} n_m = \infty$.
\end{proof}

As we saw in the introduction, by choosing $n_m=m$ for all $m \ge 1$, Corollary~\ref{c:main} gives a limit statement reminiscent of Lochs' Theorem, but with convergence in probability. Our final result, Theorem~\ref{t:main2}, shows that this limit statement also holds almost surely. The proof we present for Theorem~\ref{t:main2} below is inspired by the proof of \cite[Theorem 4]{dajani01}.

\begin{proof}[Proof of Theorem~\ref{t:main2}]
Fix some $\bm u \in [\beta - 1, 1]^\mathbb N$. It follows from \eqref{q:lowerboundk} that for all $x \in [0,1]$
\[ \liminf_{m \to \infty} \frac{k(m, \bm u,x)}{m} \ge \frac{\log 2}{\log \beta}.\]
Conversely, let $\varepsilon \in (0,1)$ and for each $m \ge 1$ define $\bar{k}(m) = \lceil (1+\varepsilon)\frac{m\log 2}{\log \beta} \rceil$. Let
\[ \begin{split}
\mathcal P_m =\ & \{ x \in [0,1] \, : \, \mathcal I_{\bar{k}(m)} (\bm u, x) \not \subseteq \mathcal D_m(x) \}\\
\subseteq\ & \bigcup_{B \in \mathcal D_m} \bigcup_{A \in \mathcal I_{\bar{k}(m)}\, :\,  A \not \subseteq B} A \cap B\\
\subseteq \ & \bigcup_{B \in \mathcal D_m} [ B^-,  B^- + \beta^{-(1+\varepsilon)\frac{m \log 2}{\log \beta}}] \cup [ B^+- \beta^{-(1+\varepsilon)\frac{m \log 2}{\log \beta}},  B^+].
\end{split} \]
Since $\mathcal D_m$ has 
$2^m=\beta^{\frac{m \log 2}{\log \beta}}$
  elements, we have
\[  \lambda\big(\mathcal P_m\big) \le \beta^{\frac{m \log 2}{\log \beta}} \cdot 2 \cdot \beta^{-(1+\varepsilon)\frac{m \log 2}{\log \beta}} \le 2 \cdot \beta^{-\varepsilon\frac{m \log 2}{\log \beta}}
 =2\cdot 2^{-\varepsilon m},\]
which gives that $\sum_{m=1}^{\infty} \lambda\big(\mathcal P_m\big) < \infty$. From the Borel-Cantelli Lemma it follows that
\[ \lambda\big(\{x \in [0,1] \, :\,  x \in \mathcal P_m \text{ for infinitely many $m \in \mathbb N$}\}\big) = 0.\]
Hence,
\[ \lambda\big(\{x \in [0,1] \, :\,  \exists M \in \mathbb{N} \text{ s.t. } \forall m \geq M  \, \, \mathcal I_{\bar{k}(m)} (\bm u, x) \subseteq \mathcal D_m(x) \}\big) = 1,\]
or in other words, for Lebesgue almost all $x \in [0,1]$ there exists an $M \in \mathbb N$ such that for all $m \ge M$ it holds that $k(m, \bm u,x) \le \bar k(m)$. This gives
\[ \limsup_{m \rightarrow \infty} \frac{k(m,\bm u,x)}{m} \le \limsup_{m \to \infty} \frac{\bar k(m)}{m} = (1+\varepsilon)\frac{\log 2}{\log \beta}, \qquad \lambda\text{-a.e.}\]
Since $\varepsilon > 0$ was arbitrary, this concludes the proof.
\end{proof}

\begin{remark}{\rm Note that the first part of the previous proof holds for all $x \in [0,1]$. It is the second part that only holds Lebesgue almost everywhere.
}\end{remark}

\section{Random amplification factor}\label{sec4}
In practice it is not only the threshold value $u$ that is subject to fluctuations present in the circuit, but also the amplification factor $\beta$. This issue and its implications 
for signal processing were discussed extensively in \cite{DY06,ward,DGWY}. 
Under some extra assumptions, e.g., amplification factors varying slowly and smoothly,
one can find some ways to remedy this issue.
However, in the general case, 
%
as the following simple consideration shows, in the presence of random amplification factors, one
cannot expect to reliably determine \emph{a significant number} of digits in the base 2 expansion of the input signal $x$ by linking them
to the digits from a random $\beta$-expansion of $x$.

\medskip
Let us  start by modeling the random amplification factors. Suppose that at each iteration the amplification factor $\beta$ assumes a random value in some interval $[\beta_{\min}, \beta_{\max}] \subseteq (1,2)$. Denote by $\bm\beta = (\beta_n)_{n \ge 1} \in [\beta_{\min}, \beta_{\max}]^\mathbb N$ the corresponding sequence. Similarly, we denote by $\bm u= (u_n)_{n \ge 1}$ again the sequence of the corresponding random threshold values. We assume $u_n\in [1,(\beta_{\max}-1)^{-1}]$ for all $n$. As we will see below, the sequence $\bm u$ will not have any effect on the conclusions.

\medskip
Again, randomly choose $x_0=x$ uniformly in  $[0,1]$. The bits 
$b_n$, $n\ge 1$, are defined iteratively by
\begin{equation}\label{eq:randbetagen}
b_n=Q_{u_n}(\beta_n x_{n-1})=\begin{cases} 0,&\text{ if } \beta_nx_{n-1}<u_n\\
1,&\text{ if } \beta_nx_{n-1}\ge u_n\\
 \end{cases},
 \text{ and } x_n = \beta_n x_{n-1}-b_n.
\end{equation}
Thus for all $n$, one has
\begin{equation}\label{eq:rndbeta}
x = \sum_{i=1}^n \frac{b_i }{\prod_{j=1}^i \beta_j} + \frac{x_n}{\prod_{j=1}^n \beta_j}.
\end{equation}
\begin{lemma}
We have $x_n \leq (\beta_{\max}-1)^{-1}$ for all $n$.
\end{lemma}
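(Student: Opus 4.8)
The plan is to prove the bound by a straightforward induction on $n$. The base case is immediate: since $x_0 = x \in [0,1]$ and $\beta_{\max} < 2$ forces $(\beta_{\max}-1)^{-1} > 1$, we have $x_0 \le 1 \le (\beta_{\max}-1)^{-1}$. Here it is worth recording explicitly that the hypothesis $\beta_{\max}<2$ is what guarantees the right endpoint is at least $1$, so that the initial interval $[0,1]$ is contained in $[0,(\beta_{\max}-1)^{-1}]$.

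For the inductive step, I would assume $x_{n-1} \le (\beta_{\max}-1)^{-1}$ and split according to the value of the bit $b_n$ produced by the quantiser in \eqref{eq:randbetagen}. If $b_n = 0$, then by definition $\beta_n x_{n-1} < u_n$ and $x_n = \beta_n x_{n-1}$, so the standing assumption $u_n \le (\beta_{\max}-1)^{-1}$ gives directly $x_n < u_n \le (\beta_{\max}-1)^{-1}$; notice this branch does not even invoke the induction hypothesis. If $b_n = 1$, then $x_n = \beta_n x_{n-1} - 1$, and I would combine the induction hypothesis with $\beta_n \le \beta_{\max}$ to estimate
\[
x_n = \beta_n x_{n-1} - 1 \le \frac{\beta_{\max}}{\beta_{\max}-1} - 1 = \frac{1}{\beta_{\max}-1}.
\]
Since both branches yield $x_n \le (\beta_{\max}-1)^{-1}$, the induction closes.

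There is no genuine obstacle in this argument; it is essentially a bookkeeping verification. The one point deserving attention is the algebraic identity $\frac{\beta_{\max}}{\beta_{\max}-1} - 1 = \frac{1}{\beta_{\max}-1}$, which says that $(\beta_{\max}-1)^{-1}$ is exactly the fixed point of the worst-case map $y \mapsto \beta_{\max}\, y - 1$, and hence that this endpoint is precisely preserved rather than merely controlled. The other subtlety is that the two branches of the quantiser rely on different hypotheses: the $b_n = 0$ case uses only the threshold bound $u_n \le (\beta_{\max}-1)^{-1}$, whereas the $b_n = 1$ case uses both the induction hypothesis and $\beta_n \le \beta_{\max}$. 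One should therefore confirm that the standing assumption $u_n \in [1,(\beta_{\max}-1)^{-1}]$ is exactly what makes the first branch go through, which explains the remark in the excerpt that the sequence $\bm u$ has no effect on the conclusions.
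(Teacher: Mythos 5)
Your proof is correct and is essentially identical to the paper's: the same induction on $n$, the same case split on the bit value, with the $b_n=0$ branch using only the threshold bound $u_n \le (\beta_{\max}-1)^{-1}$ and the $b_n=1$ branch using the induction hypothesis together with $\beta_n \le \beta_{\max}$ and the fixed-point identity $\frac{\beta_{\max}}{\beta_{\max}-1}-1 = \frac{1}{\beta_{\max}-1}$. Your added commentary on which hypothesis each branch uses is accurate but not a difference in substance.
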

\begin{proof}
We have $x_0 \leq 1 \leq (\beta_{\max}-1)^{-1}$. Now suppose $x_n \leq (\beta_{\max}-1)^{-1}$ holds for some $n$. If $b_{n+1} = 0$, then $$x_{n+1} = \beta_{n+1} x_n - b_{n+1} = \beta_{n+1} x_n < u_{n+1} \leq (\beta_{\max}-1)^{-1}.$$ On the other hand, if $b_{n+1} = 1$, then
$$x_{n+1} = \beta_{n+1} x_n - 1 \leq \frac{\beta_{n+1}}{\beta_{\max}-1} - 1 \leq \frac{\beta_{\max} - \beta_{\max} + 1}{\beta_{\max}-1} = (\beta_{\max}-1)^{-1}.$$
So the statement holds in both cases.
\end{proof}

Setting $\varkappa=(\beta_{\max}-1)^{-1}$, it follows from the above lemma and \eqref{eq:rndbeta} that
\begin{equation}\label{eq:rndbetabound}
 0\le x-\sum_{i=1}^n \frac{b_i }{\prod_{j=1}^i \beta_j}\le\frac {\varkappa}{\beta_{\min}^n}\to 0\text{ as }n\to \infty.
\end{equation}
Hence, the digits $b_n$ correspond to an expansion of $x$ of the form $x = \sum_{i=1}^\infty \frac{b_i }{\prod_{j=1}^i \beta_j}$. These are called a {\em Cantor real base expansions} and are studied in \cite{CC21}.

\medskip
However, given the first $m$ output digits $b_1, \ldots, b_m$, without exact knowledge on the  sequence $\bm \beta $ of random $\beta$-encoder amplifications, the only certain conclusion about the location of $x=x_0$ one can draw  from \eqref{eq:rndbeta} is that
$$ x \in \hat{\mathcal I}_{(b_1, \ldots, b_m)} := \left[  \sum_{k=1}^m \frac{b_k}{\beta_{\max}^k}, \sum_{k=1}^m \frac{b_k}{\beta_{\min}^k} + \frac{\varkappa}{\beta_{\min}^m} \right].
$$
The immediate conclusion is that the length $\hat{\mathcal I}_{(b_1, \ldots, b_m)}$ does not converge\footnote{Unless all $\beta$-digits $b_n$ are $0$} to $0$ as $m\to\infty$, and hence
we can not reliably determine a large number of  binary digits of $x$.
Hence, under the assumption that amplification factors fluctuate in the $\beta$-encoder circuit, one cannot guarantee the quality of the corresponding  pseudo-random number generators
studied earlier in the literature.

\medskip
Nevertheless, it is absolutely clear, that the `random' $\beta$-expansion circuit does produce digits $(b_m)$ which are sufficiently random, and hence can, in principle, be used in random number generators. The natural practical questions are how much randomness is in $(b_1,\ldots,b_m)$, and how can one \emph{extract} this randomness?

\medskip
Let us start with the first question. Suppose $\boldsymbol\beta=(\beta_n)$  is a random process of random  amplification factors assuming values $\beta_n\in [\beta_{\min},\beta_{\max}]$ for all $n$. We denote by $\rho$ the corresponding probability law on $[\beta_{\min},\beta_{\max}]^{\mathbb N}$. As we will see, the threshold values $\bm u=(u_n)$ will not be important. For convenience we will assume $u_n=1$ for all $n$. The initial point $x=x_0$ will be chosen uniformly in $[0,1]$. Recall that $\lambda$ denotes the Lebesgue measure on $[0,1]$. Let $\Omega=[\beta_{\min},\beta_{\max}]^{\mathbb N}\times [0,1]$  and let $\P=\rho\times\lambda$ denote the corresponding probability law. Consider now the first $m$ $\beta$-digits $(b_1,\ldots,b_m)$ obtained according to \eqref{eq:randbetagen}. We will view $b_1=b_1(\omega), \ldots, b_m=b_m(\omega)$ as random variables on $\Omega$ with $\omega=(\bm\beta,x_0)$ distributed according to  $\P=\rho \times\lambda$.

\medskip
One way to quantify randomness in $(b_1,\ldots,b_m)$ is to estimate the so-called {\em $\min$-entropy} ${\mathbf H}_\infty(\P_m)$ of the corresponding probability distribution $\P_m$ on the space of binary strings of length $m$. If we write $c_1^m:= c_1 \cdots c_m \in \{0,1\}^m$ for a word of length $m$, then
$$\aligned
{\mathbf H}_\infty(\P_m) := \ & \min_{c_1^m\in \{0,1\}^m} \log_2 \frac 1{\P_m(c_1^m)}=
 -\log_2 \max_{c_1^m\in \{0,1\}^m} \P_m(c_1^m)\\
 = \ & 
-\log_2 \max_{c_1^m\in \{0,1\}^m}\P\left(\left\{ \omega\in\Omega: b_1(\omega)=c_1,\ldots, b_m(\omega)=c_m\right\}\right).
\endaligned
$$
The lower bound on ${\mathbf H}_\infty(\P_m)$ is relatively straightforward: indeed, for any
$c_1^m\in \{0,1\}^m$, by the law of total probability,
$$\aligned
\P_m(c_1^m)&= \P(\{(\bm\beta,x)\in\Omega:\ b_1(\bm\beta,x)=c_1,\ldots,b_m(\bm\beta,x)=c_m\})\\
&=\int\limits_{[\beta_{\min},\beta_{\max}]^{\mathbb N}} \lambda\Bigl(\Bigl\{x\in [0,1]: b_1(\bm\beta,x)=c_1, \ldots,
b_m(\bm\beta,x)=c_m\Bigr\}\Bigr) \rho(d\bm\beta).
\endaligned
$$
For fixed $\beta_1,\ldots, \beta_m$, one has
$$
\Bigl\{x\in [0,1]: b_1(\bm\beta,x)=c_1, \ldots,
b_m(\bm\beta,x)=c_m\Bigr\}
\subseteq
\left[  \sum_{i=1}^m \frac{c_i }{\prod_{j=1}^i \beta_j},
 \sum_{i=1}^m \frac{c_i }{\prod_{j=1}^i \beta_j}+
\frac{\varkappa}{\prod_{j=1}^m \beta_j} 
\right],
$$
and hence,
$$\aligned
\P_m(c_1^m)&\le 
\int\limits_{[\beta_{\min},\beta_{\max}]^{\mathbb N}} \frac{\varkappa}{\prod_{j=1}^m \beta_j}\, \rho(d\bm\beta)\le 
\frac{\varkappa}{(\beta_{\min})^m}\int\limits_{[\beta_{\min},\beta_{\max}]^{\mathbb N}} 
\rho(d\bm\beta)=\frac{\varkappa}{(\beta_{\min})^m}.
\endaligned
$$
Therefore,
\begin{equation}\label{eq:lowerminentropy}
\mathbf{H}_\infty(\P_m)\ge m\log_2 \beta_{\min}-\log_2 \varkappa.
\end{equation}

\vskip .2cm
This argument shows that the min-entropy of our physical source of randomness -- the $\beta$-encoder circuit -- grows linearly in $m$, and that the growth-rate is at least $\frac{\log\beta_{\min}}{\log 2}$, i.e., the entropy of the `worst' or the least random $\beta$-transformation, which is present in the mix. A source $X$ is called a {\em random $(m,k)$-source} if $X$ takes values in $\{0,1\}^m$ and ${\mathbf H}_\infty(X)\ge k$. The computation above shows that the string of the first $m$ bits of the $\beta$-encoder $\boldsymbol b_m=(b_1,\ldots,b_m)$ is an $(m,k)$-source for any $k\le m\frac {\log\beta_{\min}}{\log 2}-\log_2 \varkappa$.

\medskip
For the next step we turn to the the theory of randomness extracts developed by 1980's by
Chor, Goldreich, Cohen, Wigderson, Zuckerman and many others (c.f., \cite{Tre,Wig}). The basic idea is, given a sufficiently  random binary vector of length $m$,  $X\in\{0,1\}^m$, find a possibly smaller integer $n$, $n\le m$, and an extractor function $\textsf{Ext}$  mapping from $\{0,1\}^m$ into $\{0,1\}^n$, such that $Y=\textsf{Ext}(X)$ is (nearly) uniformly distributed in $\{0,1\}^n$. 
To formalize the idea further, we say that a (deterministic) $\epsilon$-extractor $\textsf{Ext}$ is mapping from $\{0,1\}^m$ into $\{0,1\}^n$ such that
the distribution $\P_Y$ of $Y=\textsf{Ext}(X)$ is close to the uniform distribution $\mathbb U_n$ on $\{0,1\}^n$ in the sense that
$$
  d_{\text{TV}}(\P_Y,\mathbb U_n) =\frac 12\sum_{w\in\{0,1\}^n} |\P_Y(w)-2^{-n}|<\epsilon,
$$
where $d_{\text{TV}}$ is the total variation metric. Unfortunately, a simple argument (e.g., \cite{Raz}) shows that it is not possible to construct a universal extractor, capable of  producing 
an output bit, which is $\epsilon$-close to uniform, $\epsilon<1/2$, for all random vectors with $X\in\{0,1\}^m$  with large min-entropy  $\mathbf{H}_{\infty}(X)\ge m-1$.
Indeed, suppose that such an extractor $\operatorname{\textsf{Ext}}:\{0,1\}^m \rightarrow\{0,1\}$ exists. 
Let $$S_0=\left\{x \in\{0,1\}^m: \operatorname{\textsf{Ext}}(x)=0\right\}\text{ and }S_1=\left\{x \in\{0,1\}^m: \operatorname{\textsf{Ext}}(x)=1\right\}.$$
Note also, that since $S_0\cup S_1=\{0,1\}^m$,  one of the sets
$S_0$ and $S_1$  has cardinality at least $ 2^{m-1}$. Suppose for simplicity that  
$|S_0|\ge 2^{m-1}$ and consider a random element $X_0$, which is uniformly distributed on $S_0$. Then $\mathbf{H}_{\infty}\left(X_0\right) \geq m-1$.
However, $Y=\operatorname{\textsf{Ext}}\left(X_0\right)=0$ identically,
and hence $Y$ is not $\epsilon$-close to $\mathbb U_1$.
Fortunately, one can turn to the so-called seeded randomness extractors.

\medskip
A {\em seeded $(k,\varepsilon)$-extractor} is a function 
$$\textsf{Ext}: \,\{0,1\}^m\times\{0,1\}^{d} \rightarrow \{0,1\}^{n}$$ such that for every $(m,k)$-source $X$, the distribution of $Y=\textsf{Ext}\left(X, Z\right)$, where $Z\sim \mathbb{U}_{d}$, is $\varepsilon$-close to $\mathbb{U}_{n}$.  A seeded extractor, if it exists, is able to take an arbitrary sufficiently 
random input $X$ (measured in terms of its min-entropy), and, hopefully, a relatively short uniformly distributed random seed,
to produce a nearly uniformly distributed output.
The principal question is under which conditions on $m,d,k,n,$ and $\varepsilon$,
a seeded extractor exist. There are numerous results of such nature.
Let us recall the following: 

\begin{theorem}[Theorem 1.5, \cite{thmonefive}]
For every constant $\alpha>0$, and all positive integers $n, k$ and all $\varepsilon>0$, there is an explicit construction of a $(k, \varepsilon)$-extractor $\operatorname{ \textsf{Ext}}:\{0,1\}^{m} \times\{0,1\}^{d} \rightarrow\{0,1\}^{n}$ with $d=O(\log m+\log (1 / \varepsilon))$ and $n \geqslant(1-\alpha) k$.
\end{theorem}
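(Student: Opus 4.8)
\emph{The statement is a known result from the pseudorandomness literature (the Guruswami--Umans--Vadhan extractor), so rather than reproving it from scratch I would follow the two-stage strategy that underlies its proof: first build a lossless condenser of nearly optimal seed length, then convert it into an extractor by a short high-rate extraction step.} Recall that a lossless condenser is a map $\mathsf{Cond}\colon \{0,1\}^m\times\{0,1\}^d\to\{0,1\}^{m'}$ such that for every $(m,k)$-source $X$ and $Z\sim\mathbb U_d$ the pair $(Z,\mathsf{Cond}(X,Z))$ is $\varepsilon$-close to a source of min-entropy $k+d$; it is \emph{lossless} when no entropy is dropped but the output length $m'$ is much smaller than $m$. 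The plan is to produce such a condenser whose output min-entropy rate $\frac{k+d}{m'}$ is at least $1-\alpha/2$, and then to apply to its output a simple extractor for high-min-entropy-rate sources (obtained for instance by iterating the same condenser a constant number of times, the number depending only on $\alpha$), losing only an $\alpha/2$ fraction of the entropy and yielding $n\ge(1-\alpha)k$ nearly uniform output bits while the total seed length stays $O(\log m+\log(1/\varepsilon))$.

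For the condenser itself I would use the algebraic construction based on Parvaresh--Vardy codes. Identify the source string with a polynomial $f\in\mathbb F_q[Y]$ of degree less than $m/\log q$, and the seed with a field element $y\in\mathbb F_q$; fix an irreducible $E\in\mathbb F_q[Y]$ of the appropriate degree and a power parameter $h$, and define
\[
  \mathsf{Cond}(f,y)=\bigl(y,\ f(y),\ (f^{h}\bmod E)(y),\ \ldots,\ (f^{h^{s-1}}\bmod E)(y)\bigr).
\]
The output therefore sits in $\mathbb F_q^{\,s+1}$, i.e.\ in about $(s+1)\log q$ bits, and the whole game is to choose $q,h,s$ so that this is only slightly larger than $k$ while $\log q=O(\log m+\log(1/\varepsilon))$, which is precisely what forces the seed length to be logarithmic.

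The heart of the argument, and the step I expect to be the main obstacle, is the analysis showing this map is lossless; equivalently, that the associated code is list-recoverable with small lists. Concretely, if the condenser lost entropy then an unexpectedly large set of inputs $f$ would map into a small set of outputs, and one shows this cannot happen by a list-decoding argument: from such a collision one interpolates a nonzero low-degree multivariate polynomial $Q(Y,X_0,\ldots,X_{s-1})$ vanishing on all the relevant evaluation tuples, and then exploits the algebraic structure $f_i=f^{h^i}\bmod E$ to collapse $Q$ into a single nonzero univariate polynomial over the field $\mathbb F_q[Y]/(E)$ whose roots include every offending $f$. Root counting in this field then caps the number of such $f$ by roughly $h^{s}$, which is the list-size bound that makes the condenser lossless up to error $\varepsilon$. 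Getting this algebraic reduction to go through with the power map $x\mapsto x^{h}$ playing the role that the Frobenius plays for Reed--Solomon codes is exactly what yields the near-optimal seed length, and it is the technically delicate part. The surrounding reductions --- condenser-to-extractor, iterating to boost the entropy rate, and the final parameter optimization to hit $d=O(\log m+\log(1/\varepsilon))$ and $n\ge(1-\alpha)k$ --- are comparatively routine bookkeeping once the list-size bound is in hand.
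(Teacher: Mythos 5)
This theorem is not proved in the paper at all: it is imported verbatim, with attribution, as Theorem 1.5 of the cited reference (the Guruswami--Umans--Vadhan extractor paper) and is used in Section 4 purely as a black box, so there is no in-paper proof to measure your attempt against. That said, you correctly identified the cited work, and your outline is faithful to the proof that actually appears there: the Parvaresh--Vardy condenser $\mathsf{Cond}(f,y)=\bigl(y,\,f(y),\,(f^{h}\bmod E)(y),\ldots,(f^{h^{s-1}}\bmod E)(y)\bigr)$, the interpolation-plus-root-counting argument over $\mathbb F_q[Y]/(E)$ yielding a list-size bound of roughly $h^{s}$, and the condenser-to-extractor composition are exactly the ingredients of that argument.

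Two caveats. First, what you wrote is an outline, not a proof: the list-recovery bound (which you yourself flag as the technical heart) and the final parameter bookkeeping are described but never carried out, so as a self-contained argument your text has essentially the same status as the paper's citation. Second, one concrete mechanism you propose does not work: a high-min-entropy-rate \emph{extractor} cannot be ``obtained by iterating the same condenser a constant number of times.'' A lossless condenser whose output length is about $(1+\alpha)k$ leaves an entropy deficiency of order $\alpha k$, and re-applying it does not shrink that deficiency (the input entropy is still $k$ and the output length is still about $(1+\alpha)k$); more fundamentally, no amount of condensing alone produces output that is $\varepsilon$-close to uniform in total variation. Guruswami--Umans--Vadhan finish instead by composing their condenser with a genuinely different, pre-existing primitive: an extractor for sources of entropy rate close to $1$ (block-source/hashing-style techniques in the tradition of Nisan--Zuckerman and Goldreich--Wigderson). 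If your sketch were to stand as a proof, that final step would need a correct construction or citation, and the list-decoding lemma would need to be proved rather than summarized.
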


Taking into account that the distribution of digits produced by the 
$\beta$-encoder has min-entropy at least of the order of $m\frac{\log\beta_{\min}}{\log 2}$, the above theorem
states that we can produce $n=(1-\alpha) m\frac{\log\beta_{\min}}{\log 2}$ of nearly uniformly distributed binary digits $(a_1,\ldots,a_n)$. Equivalently, we need 
$m=\frac 1{1-\alpha}  n\frac { \log 2}{\log \beta_{\min}}$ output bits of the random $\beta$-encoder to obtain 
$n$ binary well-distributed bits.

\medskip
One can compare this result with the result of Theorem \ref{t:main}, which states that we would need at least
$n\frac{\log 2}{\log \beta}$ output bits of the $\beta$-encoder, while 
the more robust universal randomness extractor would require
$\frac 1{(1-\alpha)}n \frac{\log 2}{\log \beta}$, i.e., only a fixed fraction more.
Thus, the price we have to pay is rather small since the bits are produced by a relatively cheap circuit working at high clock frequency. Therefore,
switching from a specific extraction scheme based on entropy encoding
suggested by Jitsumatsu et al.~\cite{KJ16} to a universal randomness extractor does not constitute a significant limitation.

\medskip
However, the important point we have not yet taken into account is the need
to use a relatively short, but ``purely random",  seed of length $d=O(\log m+\log (1 / \varepsilon))$. 
In practice one does not have access to such sources of ``pure randomness".
Fortunately, weak sources of randomness, such as $\beta$-encoders, can be used
as seeds as well. This brings us to the discussion of 
extractors with weak random seeds. In \cite{Raz} the following definition of two-sources-extractors is given.

\begin{defn} (Two-Sources-Extractor \cite{Raz})
A function $\textsf{Ext}:\{0,1\}^{m_1} \times\{0,1\}^{m_2} \rightarrow\{0,1\}^n$ is an $\left[\left(m_1, k_1\right),\left(m_2, k_2\right) \mapsto n \sim \varepsilon\right]$-two-sources-extractor if for every $\left(m_1, k_1\right)$ source $X_1$ and every independent $\left(m_2, k_2\right)$-source $X_2$, the distribution of the random variable $\textsf{Ext}\left(X_1, X_2\right)$ is $\varepsilon$-close to $\mathbb U_n$ (i.e., the uniform distribution over $\left.\{0,1\}^n\right)$.
\end{defn}

Similarly, one can define source extractors for any number of sources $\ell\ge 2$,
$$\textsf{Ext}:\{0,1\}^{m_1} \times\{0,1\}^{m_2} \times \cdots \times \{0,1\}^{m_\ell} \rightarrow\{0,1\}^n
$$
such that the extractor $\textsf{Ext}(X_1,\ldots,X_\ell)$ is $\epsilon$-close to $\mathbb U_n$
for all independent $(m_1,k_1)$,$\ldots$,$(m_\ell,k_\ell)$-sources  $(X_1,\ldots,X_\ell)$.

\medskip
The theory of multiple source extractors was actively developed in the past 25 years.
It turns out that there is a significant difference between the cases $\ell=2$ and $\ell\ge 3$. The case $\ell=2$ is substantially more complicated.
It is indeed possible to construct good, efficient two-source extractors, say for $m_1=m_2=m$ with the min-entropy of at least $\frac 12 m$.

\begin{theorem}\cite{Sha}
For every constant $\delta>0$ there is a constant $C>0$ such that for large enough $m$, setting $k=(1 / 2+\delta) m$ and $\epsilon \leq 2^{-\log ^4 m}$ there is an explicit $[(m, k),(m,k) \mapsto n\sim \epsilon]$-two-source extractor $\textsf{Ext}:\{0,1\}^m \times\{0,1\}^m \rightarrow\{0,1\}^n$ for $n=2 k-C \log (1 / \epsilon)$.
\end{theorem}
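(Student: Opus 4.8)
The plan is to prove this via the standard two-source-extractor machinery, reducing the probabilistic statement to a character-sum (bias) estimate and then amplifying the output length. First I would use the convexity of total variation distance to reduce to \emph{flat} sources: every $(m,k)$-source is a convex combination of sources uniform on a set of size $2^k$, so it suffices to establish $\varepsilon$-closeness when $X_1$ is uniform on some $A\subseteq\{0,1\}^m$ and $X_2$ is uniform on some $B\subseteq\{0,1\}^m$ with $|A|=|B|=2^k$.

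Next I would pass to the Fourier side. By Parseval together with Cauchy--Schwarz one obtains
\[
 d_{\text{TV}}\big(\textsf{Ext}(X_1,X_2),\mathbb U_n\big)\le \tfrac12\Big(\sum_{\alpha\in\{0,1\}^n\setminus\{0\}}\mathrm{bias}_\alpha^2\Big)^{1/2},\quad
 \mathrm{bias}_\alpha:=\Big|\mathbb E_{x\in A,\,y\in B}\big[(-1)^{\langle\alpha,\textsf{Ext}(x,y)\rangle}\big]\Big|,
\]
so it is enough to control every nonzero linear combination of the output bits. Identifying $\{0,1\}^m$ with the field $\mathbb F_{2^m}$ and taking $\textsf{Ext}(x,y)$ to be an $\mathbb F_2$-linear read-off of the product $x\cdot y$, each such combination equals $(-1)^{\mathrm{Tr}(\gamma_\alpha xy)}$ for a nonzero $\gamma_\alpha$, i.e.\ an additive character of a bilinear form. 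The bias of such a character is then bounded by a Lindsey-type orthogonality estimate: the matrix $M_{x,y}=(-1)^{\mathrm{Tr}(\gamma_\alpha xy)}$ satisfies $MM^\ast=2^m I$, so all its singular values equal $2^{m/2}$, and hence
\[
 \mathrm{bias}_\alpha\le \frac{2^{m/2}\sqrt{|A|\,|B|}}{|A|\,|B|}=\Big(\frac{2^m}{|A|\,|B|}\Big)^{1/2}=2^{(m-2k)/2}=2^{-\delta m},
\]
using $k=(\tfrac12+\delta)m$. This is precisely the step that exploits the hypothesis that the min-entropy rate exceeds $\tfrac12$, and it already yields a working extractor.

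The hard part --- and the reason the statement is credited to a dedicated construction rather than derived from the elementary estimate above --- is the \emph{output length}. Feeding the uniform bound $\mathrm{bias}_\alpha\le 2^{-\delta m}$ into the $\ell^2$ inequality costs a factor $2^{n/2}$, which only permits $n\approx 2\delta m-2\log_2(1/\varepsilon)=2k-m-O(\log(1/\varepsilon))$ output bits, far short of the claimed $n=2k-C\log(1/\varepsilon)$. Closing this gap means extracting essentially \emph{all} of the joint entropy $2k$, not merely the surplus above the rate-$\tfrac12$ threshold.

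I would achieve this by output-length amplification: view each rate-$>\tfrac12$ source as a block source, and interleave the elementary extractor above with a seeded extractor in an alternating-extraction scheme, recycling the entropy of one source to repeatedly extract fresh near-uniform blocks from the other. Making this recursion lose only $O(\log(1/\varepsilon))$ bits while keeping the error below the quasi-polynomial threshold $\varepsilon\le 2^{-\log^4 m}$ is the delicate technical core, and it is exactly here that the explicit construction of \cite{Sha} is needed; the character-sum bound merely supplies the base case.
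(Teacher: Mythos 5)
First, a point of context: the paper gives no proof of this statement at all --- it is quoted verbatim from \cite{Sha} as an external black-box result, so your proposal can only be judged as a standalone argument, not against an internal proof.

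Judged that way, the elementary half of your argument is correct and well executed. The reduction to flat sources by convexity of total variation distance, the Vazirani XOR-lemma bound $d_{\text{TV}} \le \tfrac12 \bigl( \sum_{\alpha \neq 0} \mathrm{bias}_\alpha^2 \bigr)^{1/2}$, and the Lindsey-type singular-value estimate giving $\mathrm{bias}_\alpha \le 2^{(m-2k)/2} = 2^{-\delta m}$ for the $\mathbb{F}_{2^m}$-multiplication extractor are all standard and accurately stated, and your diagnosis that this route caps the output length at $n \approx 2k - m - O(\log(1/\epsilon))$ is exactly right. This part shows a correct understanding of where the difficulty of the theorem lies.

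The genuine gap is your final paragraph, which is a plan rather than a proof. The entire content of the theorem beyond the classical Hadamard/inner-product extractor is the output length $n = 2k - C\log(1/\epsilon)$, i.e.\ extracting essentially all of the joint entropy, and that is precisely the part you leave unestablished: you write that ``it is exactly here that the explicit construction of \cite{Sha} is needed,'' which concedes the statement rather than proving it. To push your sketch through you would need, at minimum: (i) a \emph{strong} version of the base extractor, i.e.\ near-uniformity of a short output block conditioned on one of the sources, since the recycled bits are correlated with both sources and cannot otherwise serve as an independent seed; (ii) a conditioning (chain-rule) argument showing that the other source retains min-entropy close to $k$ after fixing the extracted block; (iii) an explicit strong seeded extractor with seed length $O(\log m + \log(1/\epsilon))$ that outputs $k - O(\log(1/\epsilon))$ bits; and (iv) error bookkeeping verifying that the accumulated error stays below $\epsilon \le 2^{-\log^4 m}$, which is where the ``large enough $m$'' hypothesis enters. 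None of these steps is carried out, so as submitted the proposal proves only the weaker bound $n = 2k - m - O(\log(1/\epsilon))$ that you yourself identify as insufficient.
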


In our case, given the bound on min-entropy (\ref{eq:lowerminentropy}), that would necessarily imply that we need an extra assumption
that 
$$
\beta_{\min}>\sqrt{2}.
$$
It is not immediately clear whether such a restriction would constitute a serious limitation 
for applications, but it is clear that such an  a priori assumption would be undesirable.
On the other hand, if one turns to randomness extractors for $\ell$ weak sources with $\ell\ge 3$, assumptions on $\beta_{\min}$ can be relaxed.
Barak, Impagliazzo and Wigderson \cite{BIW} showed using techniques from additive combinatorics that for any $\delta>0$,
there exist randomness extractors requiring only $\ell=\text{poly}(1/\delta)$ independent $(m,\delta m)$ sources, where $\text{poly}$ is some polynomial function.
It means that assuming that $\beta_{\min}>1$, i.e., $\beta_{\min}=1+\tilde\delta$
for some $\tilde\delta>0$ is sufficient.
These results were further improved by   Raz \cite{Raz} who showed that $\ell=3$ is indeed sufficient.

\medskip
The final point of discussion is whether one could 
get $\ell>1$ independent weak sources of randomness. This could be achieved by running the $\beta$-encoder several times, or, running it once, generating a very long series of bits $N\gg 1$, and then extracting strings of length $m$, with sufficiently large gaps between them. 

\section*{Acknowledgments}
We would like to thank Yutaka Jitsumatsu for valuable discussions.

\medskip

\bibliographystyle{alpha} 
\bibliography{lochs} 
\end{document}